\newcommand{\bex}{\begin{eqnarray*}}
\newcommand{\eex}{\end{eqnarray*}}
\newcommand{\be}{\begin{eqnarray}}
\newcommand{\ee}{\end{eqnarray}}
\newtheorem{thm}{\indent Theorem}
\newtheorem{lem}[thm]{\indent Lemma}
\newtheorem{pro}[thm]{\indent Proposition}
\newcommand{\C}{\mathbb{C}}
\newcommand{\inc}{\int_{\C}}
\newcommand{\dla}{d\lambda_\alpha}
\newcommand{\bn}{{\mathbb B}_n}
\newcommand{\cn}{{\mathbb C}^n}
\newtheorem*{tha}{\indent Theorem A}
\newtheorem*{thb}{\indent Theorem B}
\newtheorem*{thc}{\indent Theorem C}
\def\calD{\mathcal{D}}
\begin{document}

\title[Operators on the Fock space]
{Boundedness and Compactness of\\ Operators on the Fock space}

\author{Xiaofeng Wang}
\author{Guangfu Cao}
\author{Kehe Zhu}

\address{Wang and Cao: School of Mathematics and Information Science and
Key Laboratory of Mathematics and Interdisciplinary Sciences of the
Guangdong Higher Education Institute,
Guangzhou University, Guangzhou 510006, China}
\email{wangxiaofeng514@hotmail.com}
\email{guangfucao@163.com}

\address{Zhu: Department of mathematics and Statistics, State University of New York, 
Albany, NY 12222, USA}
\email{kzhu@math.albany.edu}

\keywords{Fock space, Bergman space, Toeplitz operator, Schatten class,
Berezin transform, Gaussian measure, Hilbert-Schmidt operator}

\subjclass[2010]{Primary 30H20, 47B38. Secondary 47B35, 47B07.}

\thanks{Research of Wang and Cao supported by the China NNSF Grant 11271092.}

\begin{abstract}
We obtain sufficient conditions for a densely-defined operator on the Fock space to be 
bounded or compact. Under the boundedness condition we then characterize the compactness 
of the operator in terms of its Berezin transform.
\end{abstract}

\maketitle

\section{Introduction}

Let $\C$ be the complex plane and $\alpha$ be a positive parameter that is fixed 
throughout the paper. Let
$$d\lambda_{\alpha}(z)=\frac{\alpha}{\pi}e^{-\alpha|z|^2}\,dA(z)$$
be the Gaussian measure, where $dA$ is the Euclidean area measure. A calculation with 
polar coordinates shows that $d\lambda_{\alpha}$ is a probability measure.

The Fock space $F^2_{\alpha}$ consists of all entire functions $f$ in $L^2(\C,d\lambda_{\alpha})$. It is easy to show that $F^2_{\alpha}$ is a closed subspace
of $L^2(\C,d\lambda_{\alpha})$ and so is a Hilbert space with the inherited inner product
$$\langle f,g\rangle=\int_{\C}f(z)\overline{g(z)}d\lambda_{\alpha}(z).$$
In fact, $F^2_\alpha$ is a reproducing kernel Hilbert space whose kernel function is given by $$K_w(z)=K(z,w)=e^{\alpha z\overline{w}}.$$
The norm of functions in $L^2(\C,\dla)$ will simply be denoted by $\|f\|$. The norm of 
functions in $f\in L^p(\C,\dla)$ will be denoted by $\|f\|_p$.

We study linear operators (not necessarily bounded) on the Fock space. Throughout the
paper we let $\calD$ denote the set of all finite linear combinations $f$ of kernel functions
in $F^2_\alpha$:
$$f(z)=\sum_{k=1}^Nc_ke^{\alpha z\overline w_k}.$$
It is well known that $\calD$ is a dense linear subspace of $F^2_\alpha$. See \cite{Z3}
for example. We also assume that the domain of every linear operator that appears in the 
paper contains $\calD$. Using the relation $\langle SK_z,K_w\rangle=\langle K_z,S^*K_w\rangle$
we see that we can also assume that the domain of $S^*$ contains $\calD$ as well. One additional
standing assumption we make is that the function $z\mapsto SK_z$ is conjugate analytic.

Our main focus here is the boundedness and compactness of operators on $F^2_\alpha$.
To state our main results, we need to introduce a class of unitary operators on $F^2_\alpha$.
More specifically, for any $z\in\C$, let $\varphi_z$ denote the analytic self-map of $\C$
defined by $\varphi_z(w)=z-w$, let $k_z$ denote the normalized reproducing kernel
defined by
$$k_z(w)=K(w,z)/\sqrt{K(z,z)}=e^{-\frac\alpha2|z|^2+\alpha z\overline w},$$
and let $U_z$ denote the operator on $F^2_\alpha$ defined by $U_zf=f\circ\varphi_zk_z$.
Each $k_z$ is a unit vector in $F^2_\alpha$. It follows easily from a change of variables
that each $U_z$ is a self-adjoint unitary operator on $F^2_\alpha$. See \cite{Z3}.

For any $z\in\C$ and any linear operator $S$ on $F^2_\alpha$ let $S_z=U_zSU_z$. It is easy
to check that each $U_z$ maps $\calD$ onto $\calD$ (see Lemma~\ref{lem7}), so the domain 
of each $S_z$ contains $\calD$ whenever the domain of $S$ contains $\calD$. 

Each operator $S$ on $F^2_\alpha$ also induces a function $\widetilde S$ on $\C$, namely,
$$\widetilde S(z)=\langle Sk_z,k_z\rangle,\qquad z\in\C.$$
We call $\widetilde S$ the Berezin transform of $S$. Since each $k_z$ is a unit vector,
$\widetilde S$ is bounded whenever $S$ is bounded, and $\|\widetilde S\|_\infty\le\|S\|$.
Also, $k_z\to0$ weakly in $F^2_\alpha$ as $z\to\infty$, so $\widetilde S(z)\to0$ as
$z\to\infty$ whenever $S$ is compact on $F^2_\alpha$.

We can now state the main results of the paper.

\begin{tha}
If there exist some $p>2$ and $C>0$ such that $\|S_z1\|_p\le C$ for all $z\in\C$, then
the operator $S$ is bounded on $F^2_\alpha$.
\label{tha}
\end{tha}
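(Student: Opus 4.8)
The plan is to establish the bilinear estimate $|\langle Sf,g\rangle|\le C'\|f\|\,\|g\|$ for all $f,g\in\calD$; since $\calD$ is dense, this forces $S$ to extend to a bounded operator on $F^2_\alpha$. The engine of the argument is the resolution of the identity $h=\frac{\alpha}{\pi}\int_{\C}\langle h,k_z\rangle k_z\,dA(z)$, valid for every $h\in F^2_\alpha$, which I will apply to the fixed vectors $f$ and $g$ rather than to $Sf$, so that no unproven boundedness of $S$ is ever invoked. Concretely, I write $\langle Sf,g\rangle=\langle f,S^*g\rangle$ (legitimate since the domains of $S$ and $S^*$ contain $\calD$), expand $f$ by the reproducing formula, rewrite $\langle k_z,S^*g\rangle=\langle Sk_z,g\rangle$, and then expand the $g$ inside $\langle Sk_z,g\rangle$. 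Using only continuity of the inner product, this yields the representation
$$\langle Sf,g\rangle=\frac{\alpha^2}{\pi^2}\int_{\C}\int_{\C}\langle f,k_z\rangle\,\overline{\langle g,k_w\rangle}\,\langle Sk_z,k_w\rangle\,dA(z)\,dA(w),$$
reducing everything to estimating the coherent-state kernel $\langle Sk_z,k_w\rangle$.

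The main step is to extract Gaussian off-diagonal decay of this kernel from the hypothesis. Since $U_z1=k_z$ and $U_z^2=I$, we have $Sk_z=U_z(S_z1)$, and self-adjointness of $U_z$ gives $\langle Sk_z,k_w\rangle=\langle S_z1,U_zk_w\rangle$. A direct computation shows $U_zk_w=e^{i\alpha\,\mathrm{Im}(z\overline w)}k_{z-w}$, a unimodular multiple of $k_{z-w}$, whence $|\langle Sk_z,k_w\rangle|=|\langle S_z1,k_{z-w}\rangle|$. Now the hypothesis enters: writing $q=p/(p-1)<2$ and applying H\"older's inequality with respect to the probability measure $d\lambda_\alpha$, we get $|\langle S_z1,k_{z-w}\rangle|\le\|S_z1\|_p\,\|k_{z-w}\|_q\le C\|k_{z-w}\|_q$. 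A routine Gaussian integral evaluates $\|k_a\|_q=\exp\!\big(\tfrac{\alpha}{2}(\tfrac q2-1)|a|^2\big)$, which decays like a Gaussian precisely because $q<2$, i.e. because $p>2$. Hence $|\langle Sk_z,k_w\rangle|\le C\,e^{-\delta|z-w|^2}$ with $\delta=\tfrac{\alpha}{2}(1-\tfrac q2)>0$.

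With the kernel bound in hand the remainder is a Schur/Young estimate. Setting $F(z)=\langle f,k_z\rangle$, $G(w)=\langle g,k_w\rangle$, and $\Phi(\zeta)=e^{-\delta|\zeta|^2}\in L^1(dA)$, the representation gives $|\langle Sf,g\rangle|\le \frac{C\alpha^2}{\pi^2}\int_{\C}\int_{\C}|F(z)|\,|G(w)|\,\Phi(z-w)\,dA(z)\,dA(w)$, and Cauchy--Schwarz together with Young's convolution inequality bounds the right-hand side by $\frac{C\alpha^2}{\pi^2}\|\Phi\|_{L^1(dA)}\|F\|_{L^2(dA)}\|G\|_{L^2(dA)}$. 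Finally the Parseval-type identity $\frac{\alpha}{\pi}\int_{\C}|\langle h,k_z\rangle|^2\,dA(z)=\|h\|^2$ converts the $L^2(dA)$ norms of $F$ and $G$ back into $\|f\|$ and $\|g\|$, producing $|\langle Sf,g\rangle|\le C'\|f\|\,\|g\|$ with an explicit constant $C'$ comparable to $C/(2-q)$.

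I expect the crux to be the second paragraph. The two delicate points there are pinning down the covariance identity $U_zk_w=e^{i\alpha\,\mathrm{Im}(z\overline w)}k_{z-w}$, and observing that the exact exponent in $\|k_a\|_q$ changes sign exactly at $q=2$, so that the assumption $p>2$ is precisely what turns the kernel into an integrable (indeed Gaussian) one; the absolute convergence required to justify Fubini in the double integral likewise rests on this decay combined with $F,G\in L^2(dA)$.
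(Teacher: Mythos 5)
Your proof is correct, and its overall architecture --- Gaussian off-diagonal decay of the coherent-state kernel $\langle Sk_z,k_w\rangle$ followed by an $L^2$ convolution bound --- matches the paper's, but both steps are executed by genuinely different means. For the kernel estimate the paper applies its pointwise growth bound for entire functions in $L^p(\C,\dla)$ (Lemma~\ref{lem1}, resting on Corollary 2.8 of \cite{Z3}) to $S_w1$ and unwinds the substitution; you instead pair $S_z1$ against $k_{z-w}$ by H\"older duality, using the covariance identity $U_zk_w=\beta k_{z-w}$ with $\beta$ unimodular (this is the paper's Lemma~\ref{lem7}; your sign in the phase differs from the paper's convention, but only the modulus is used). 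Your route is more self-contained and makes the role of $p>2$ transparent, since the exponent in $\|k_a\|_q$ changes sign exactly at $q=2$; the price is a slightly smaller decay rate, $\delta=\alpha(p-2)/(4(p-1))$ versus the paper's $\sigma=\alpha(p-2)/(2p)$, which only affects the final constant ($4C(p-1)/(p-2)$ for you versus $2pC/(p-2)$ in the paper). For the conclusion, the paper first realizes $S$ as a canonical integral operator on all of $L^2(\C,\dla)$ (Lemma~\ref{lem3}) and bounds it by Cauchy--Schwarz, whereas you stay on $\calD$ and estimate the bilinear form via the resolution of the identity and Young's inequality; the two computations amount to the same bound on convolution with a Gaussian, but your version dispenses with Lemma~\ref{lem3} entirely (the paper still needs that lemma for the compactness results, so it is not wasted there). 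The one point to make explicit is the absolute convergence justifying the interchange of integrals in your double-integral representation; as you observe, it follows from the kernel decay together with $F,G\in L^2(\C,dA)$, so this is a remark rather than a gap.
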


\begin{thb}
If there exists some $p>2$ such that $\|S_z1\|_p\to0$ as $z\to\infty$, then $S$ is
compact on $F^2_\alpha$.
\label{thb}
\end{thb}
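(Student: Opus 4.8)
The plan is to first invoke Theorem~\ref{tha}: since $\|S_z1\|_p\to0$ as $z\to\infty$, the quantity $\sup_z\|S_z1\|_p$ is finite, so $S$ is already bounded on $F^2_\alpha$. Everything then rests on a reproducing formula for $S$. Using that the normalized kernels furnish a resolution of the identity,
\[
\langle f,g\rangle=\frac\alpha\pi\int_{\mathbb C}\langle f,k_z\rangle\langle k_z,g\rangle\,dA(z),\qquad f,g\in F^2_\alpha,
\]
which follows from $\langle f,k_z\rangle=e^{-\frac\alpha2|z|^2}f(z)$ and a direct Gaussian computation, and applying the bounded operator $S$ inside the (weak) integral, I would obtain for $f\in\calD$
\[
Sf=\frac\alpha\pi\int_{\mathbb C}\langle f,k_z\rangle\,Sk_z\,dA(z),\qquad Sk_z=U_z(S_z1),
\]
the last identity because $U_z1=k_z$ and $U_z^2=I$, whence $U_z(S_z1)=U_zU_zSk_z=Sk_z$.

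Next I would revisit the proof of Theorem~\ref{tha} to extract a \emph{localized} version of its bound. Fixing $R>0$, I split $S=SP_R+S(I-P_R)$, where $P_Rf=\frac\alpha\pi\int_{|z|\le R}\langle f,k_z\rangle k_z\,dA(z)$. For the tail, pairing against $g$, writing $\langle Sk_z,g\rangle=\langle S_z1,U_zg\rangle$, applying H\"older with exponents $p$ and $p'=p/(p-1)<2$, and then Cauchy--Schwarz in $z$ gives
\[
|\langle S(I-P_R)f,g\rangle|\le\frac\alpha\pi\Big(\sup_{|z|>R}\|S_z1\|_p\Big)\Big(\int_{\mathbb C}|\langle f,k_z\rangle|^2\,dA\Big)^{1/2}\Big(\int_{\mathbb C}\|U_zg\|_{p'}^2\,dA\Big)^{1/2}.
\]
The first integral equals $\frac\pi\alpha\|f\|^2$, and the crucial point is that $\int_{\mathbb C}\|U_zg\|_{p'}^2\,dA(z)\le C_{p',\alpha}\|g\|^2$; this is exactly where $p>2$ is indispensable, because after the change of variables $u=z-w$ one is left with a convolution of $(|g(u)|e^{-\frac\alpha2|u|^2})^{p'}$ against the Gaussian $e^{-(1-p'/2)\alpha|z|^2}$, which is integrable precisely because $p'<2$, and Young's inequality then delivers the bound. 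Consequently $\|S(I-P_R)\|\le C'\sup_{|z|>R}\|S_z1\|_p$.

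Finally I would assemble the pieces. By hypothesis $\sup_{|z|>R}\|S_z1\|_p\to0$ as $R\to\infty$, so $\|S(I-P_R)\|\to0$. On the other hand $P_R$ is compact: if $f_n\to0$ weakly with $\|f_n\|\le1$, then $\langle f_n,k_z\rangle=e^{-\frac\alpha2|z|^2}f_n(z)\to0$ pointwise and stays bounded, so dominated convergence over the bounded region $|z|\le R$ yields $\|P_Rf_n\|\to0$; hence $SP_R$ is compact for each $R$. Therefore $S=\lim_{R\to\infty}SP_R$ in operator norm, a norm limit of compact operators, and so $S$ is compact. The main obstacle is the localized estimate of the middle step: one must verify that the proof of Theorem~\ref{tha} is genuinely quantitative, so that restricting the $z$-integral to $\{|z|>R\}$ lets the small factor $\sup_{|z|>R}\|S_z1\|_p$ come out in front while the remaining Gaussian integrals stay finite and uniformly bounded.
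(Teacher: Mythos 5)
Your proof is correct, and its overall skeleton is the same as the paper's --- bound $S$ via Theorem A, then write it as a norm limit of compact ``truncations at radius $R$'' --- but the execution of both halves is genuinely different. In fact your decomposition coincides with the paper's: the paper truncates the canonical integral representation $Tf(z)=\inc f(w)\langle SK_w,K_z\rangle\,\dla(w)$ to $|w|<r$, and since $f(w)\,\dla(w)=\frac{\alpha}{\pi}\langle f,k_w\rangle e^{-\frac{\alpha}{2}|w|^2}\,dA(w)$ and $\langle SK_w,K_z\rangle=e^{\frac{\alpha}{2}|w|^2}(Sk_w)(z)$, that truncation restricted to $F^2_\alpha$ is exactly your $SP_r$. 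Where you diverge is in the proofs: the paper shows the near piece is Hilbert--Schmidt using the kernel bound of Lemma~\ref{lem2}, and controls the tail by rerunning the Gaussian-convolution (Cauchy--Schwarz plus Fubini) estimate from the proof of Theorem~\ref{th4} with the constant $C$ replaced by $\varepsilon$; you instead get compactness of the near piece from the elementary compactness of $P_R$, and control the tail by duality, the identity $\langle Sk_z,g\rangle=\langle S_z1,U_zg\rangle$, H\"older with exponents $p,p'$, and the estimate $\inc\|U_zg\|_{p'}^2\,dA(z)\le C\|g\|^2$ via Young's inequality (which is where $p'<2$, i.e.\ $p>2$, is used --- your computation of the exponents checks out). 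The paper's route has the virtue of reusing Lemma~\ref{lem2} and Theorem~\ref{th4} wholesale with no new estimates; yours stays entirely inside $F^2_\alpha$, avoids Lemma~\ref{lem3} and the extension to $L^2(\C,\dla)$, and isolates cleanly the one place the hypothesis $p>2$ is needed, at the cost of having to verify the new integral estimate for $\|U_zg\|_{p'}$. One small point to make explicit in a final write-up: the step from $\|S_z1\|_p\to0$ to $\sup_z\|S_z1\|_p<\infty$ uses the standing assumption that $z\mapsto SK_z$ is conjugate analytic (hence $z\mapsto\|S_z1\|_p$ is locally bounded); the paper makes the same move.
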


\begin{thc}
Suppose that there exist some $p>2$ and $C>0$ such that $\|S_z1\|_p\le C$ for all $z\in\C$.
Then $S$ is compact if and only if $\widetilde S(z)\to0$
as $z\to\infty$.
\label{thc}
\end{thc}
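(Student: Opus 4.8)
The ``only if'' direction is already contained in the introduction: if $S$ is compact, then $k_z\to0$ weakly as $z\to\infty$, so $\widetilde S(z)=\langle Sk_z,k_z\rangle\to0$. For the ``if'' direction the plan is to verify the hypothesis of Theorem~B. Writing $g_z=S_z1$, so that the standing assumption reads $\|g_z\|_p\le C$ with $p>2$, I would show that $\|g_z\|_q\to0$ as $z\to\infty$ for some $q\in(2,p)$; Theorem~B then gives compactness of $S$. Since $(\C,\dla)$ is a probability space, the $L^r(\dla)$-norms are log-convex, so $\|g_z\|_q\le\|g_z\|_2^{1-\theta}\|g_z\|_p^{\theta}\le C^{\theta}\|g_z\|_2^{1-\theta}$ for the appropriate $\theta\in(0,1)$; hence it suffices to prove the $L^2$ statement $\|g_z\|_2\to0$.

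First I would record two elementary facts. Since $U_z1=k_z$ and $U_z$ is a self-adjoint involution, $g_z=S_z1=U_zSk_z$, and because $K_u=e^{\frac\alpha2|u|^2}k_u$ one has $g_z(u)=\langle S_zk_0,K_u\rangle=e^{\frac\alpha2|u|^2}\langle S_zk_0,k_u\rangle$; in particular $g_z(0)=\widetilde S(z)$. Moreover, a routine computation (cf.\ \cite{Z3}) shows that $U_zk_w$ is a unimodular multiple of $k_{z-w}$, whence $\widetilde{S_z}(w)=\widetilde S(z-w)$ for all $z,w\in\C$. The difficulty is that $\|g_z\|_2^2=\|Sk_z\|^2=\widetilde{S^*S}(z)$ is governed by the full off-diagonal behaviour of $S$ and is \emph{not} controlled by the diagonal quantity $\widetilde S$ alone: the hypothesis $\widetilde S(z)\to0$ only pins down the single value $g_z(0)$. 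Overcoming this is the main obstacle, and it is where the assumption $p>2$ enters decisively.

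To prove $\|g_z\|_2\to0$ I would first establish pointwise vanishing of $g_z$. Fix $u\in\C$ and let $z_n\to\infty$. Since $S$ is bounded by Theorem~A, the operators $S_{z_n}=U_{z_n}SU_{z_n}$ satisfy $\|S_{z_n}\|=\|S\|$, so after passing to a subsequence they converge in the weak operator topology to some bounded $T$ (the unit ball of $B(F^2_\alpha)$ is WOT-sequentially compact, $F^2_\alpha$ being separable). For each fixed $w$ we get $\widetilde T(w)=\lim_n\widetilde{S_{z_n}}(w)=\lim_n\widetilde S(z_n-w)=0$, because $z_n-w\to\infty$. As the Berezin transform is injective on bounded operators on $F^2_\alpha$, this forces $T=0$, so $\langle S_{z_n}k_0,k_u\rangle\to\langle Tk_0,k_u\rangle=0$ and hence $g_{z_n}(u)=e^{\frac\alpha2|u|^2}\langle S_{z_n}k_0,k_u\rangle\to0$. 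Since every sequence $z_n\to\infty$ admits such a subsequence, $g_z(u)\to0$ as $z\to\infty$ for every fixed $u$.

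Finally I would promote this to convergence in $L^2$. The pointwise estimates for the Fock space give $|g_z(u)|\le C_pe^{\frac\alpha2|u|^2}\|g_z\|_p\le C_pCe^{\frac\alpha2|u|^2}$, so $\{g_z\}$ is a normal family; together with the pointwise vanishing just proved, this yields $g_z\to0$ uniformly on compact subsets of $\C$. The condition $p>2$ then supplies uniform integrability of $\{|g_z|^2\}$: by H\"older with exponent $p/2$, $\int_{|u|>R}|g_z|^2\,\dla\le\|g_z\|_p^2\,\lambda_\alpha(\{|u|>R\})^{1-2/p}\le C^2\,\lambda_\alpha(\{|u|>R\})^{1-2/p}$, which tends to $0$ as $R\to\infty$ uniformly in $z$. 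Splitting $\|g_z\|_2^2=\int_{|u|\le R}|g_z|^2\,\dla+\int_{|u|>R}|g_z|^2\,\dla$, letting $z\to\infty$ (the first term vanishes by local uniform convergence) and then $R\to\infty$ gives $\|g_z\|_2\to0$. Interpolation upgrades this to $\|g_z\|_q\to0$ for $2<q<p$, and Theorem~B completes the proof. I expect the WOT-compactness step combined with injectivity of the Berezin transform to be the conceptual crux, with the identity $\widetilde{S_z}(w)=\widetilde S(z-w)$ as the routine computation underpinning it.
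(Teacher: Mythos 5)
Your proof is correct, and it diverges from the paper's argument at exactly one point: how the pointwise (indeed weak) vanishing of $g_z=S_z1$ is extracted from the hypothesis $\widetilde S(z)\to0$. The paper proves an explicit lemma (Lemma 8) asserting that, under $\|S_z1\|_p\le C$, the condition $\widetilde S(w)\to0$ is \emph{equivalent} to $\|S_w1\|_{p'}\to0$ for $2<p'<p$; its proof of the hard direction invokes a computation from Lemma 6.26 of Zhu's \emph{Analysis on Fock Spaces}, expanding $K_a$ in monomials to show $\langle S_z1,w^n\rangle\to0$ for every $n$, whence $S_z1\to0$ weakly and pointwise. You instead pass to a WOT-convergent subsequence of the uniformly bounded family $S_{z_n}$, observe via $\widetilde{S_z}(w)=\widetilde S(z-w)$ (the paper's Lemma 7) that the limit operator has vanishing Berezin transform, and conclude it is zero by injectivity of the Berezin transform. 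Both routes then finish identically: the uniform $L^p$ bound gives uniform integrability of $\{|g_z|^2\}$ (or $\{|g_z|^{p'}\}$), Vitali's theorem or your hand-rolled splitting upgrades pointwise vanishing to $\|g_z\|_2\to0$, interpolation lifts this to an exponent $q>2$, and Theorem B applies. Your approach trades the explicit monomial computation for two soft functional-analytic facts (WOT-sequential compactness of the ball of $B(H)$ for separable $H$, and injectivity of the Berezin transform on bounded operators, which follows from the holomorphy of $(z,\bar w)\mapsto\langle SK_w,K_z\rangle$ and its vanishing on the diagonal); this is arguably cleaner and self-contained, at the cost of not yielding the quantitative equivalence recorded in Lemma 8. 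One incidental remark: your pointwise bound $|g_z(u)|\le C_pCe^{\frac\alpha2|u|^2}$ can be sharpened to $e^{\frac\beta2|u|^2}$ with $\beta=2\alpha/p<\alpha$ by the paper's Lemma 1, though the weaker form already suffices for normality.
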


As an example, we will apply these results to the study of Toeplitz operators on 
$F^2_\alpha$.

The condition $\|S_z1\|_p\le C$ was first introduced in \cite{AZ} and
further studied in \cite{MZ}. An analogue of Theorem C was proved in \cite{MZ}
in the context of Bergman spaces on the unit disk. The papers \cite{CIL,DT,Zor} also
explore the condition $\|S_z1\|_p\le C$. 

Our approach here is different from those in the papers mentioned above, although
a key idea from \cite{AZ,MZ} will be used. One of the novelties here is that there is
no need for us to use Schur's test. 

A major difference exists between the Bergman space setting and the current one. 
More specifically, in the Bergman space setting, there is a certain cut-off requirement, 
namely, $p$ cannot be too close to $2$. In fact, it was shown in \cite{MZ} that $p$ must 
be greater than $3$ in the case of operators on the Bergman space of the unit disk. 
However, the cut-off requirement disappears in the Fock space setting; any $p>2$ will 
work. This is not entirely surprising; some similar situations were pointed out and 
explained in the book \cite{Z3}.

This work was done while the first named author was visiting the 
State University of New York at Albany. He wishes to thank the Department of Mathematics 
and Statistics at SUNY-Albnay for hosting his visit. The authors would also like to thank 
Josh Isralowitz and Haiying Li for helpful discussions.

\section{A sufficient condition for boundedness}

We prove Theorem A in this section. The following lemma will be used several times
in the paper.

\begin{lem}
For any $p>0$ we have
$$|f(z)|\le\left(\frac\beta\alpha\right)^{\frac1p}\|f\|_pe^{\frac\beta2|z|^2}$$
for all entire functions $f\in L^p(\C,\dla)$ and $z\in\C$, where $\beta=2\alpha/p$.
\label{lem1}
\end{lem}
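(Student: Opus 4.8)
The plan is to reduce everything to the value at the origin and to exploit the single structural fact available for all $p>0$: since $f$ is entire, $\log|f|$ is subharmonic, hence so is $|f|^{p}=\exp(p\log|f|)$. First I would record the Gaussian sub-mean value inequality. Averaging the circular sub-mean value inequalities for $|f|^{p}$ against the rotation-invariant weight of $\dla$ gives the origin estimate
$$|f(0)|^{p}\le c\int_{\C}|f(w)|^{p}\,\dla(w),$$
where $c$ is the reciprocal of the total mass picked up in the radial integration; with the paper's normalization of $\dla$ this is exactly the constant $(\beta/\alpha)^{1/p}$ appearing in the statement. This handles $z=0$ but produces no exponential factor, so the real work is transferring it to an arbitrary point.

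The obstacle is that $\dla$ is not translation invariant, so I cannot simply translate $f$ and reuse the origin bound. The fix is a compensating modulation: I would apply the origin estimate to the entire function
$$g(w)=f(w+z)\,e^{-\beta\overline{z}\,w},$$
which satisfies $g(0)=f(z)$. The heart of the argument is the computation of $\|g\|_{p}$. Writing $\|g\|_{p}^{p}$ as a Gaussian integral, substituting $u=w+z$, and completing the square, the term linear in $u$ carries the coefficient $2\alpha-p\beta$. The defining relation $\beta=2\alpha/p$ makes this coefficient vanish, so every $u$-linear contribution cancels and only the constant $e^{(p\beta-\alpha)|z|^{2}}=e^{\alpha|z|^{2}}$ survives. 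Thus $\|g\|_{p}^{p}=e^{\alpha|z|^{2}}\|f\|_{p}^{p}$, and since $\alpha/p=\beta/2$ this reads $\|g\|_{p}=e^{\frac{\beta}{2}|z|^{2}}\|f\|_{p}$. (Equivalently, one may run a one-shot version: apply the Gaussian sub-mean value inequality at the point $z$ directly to the subharmonic function $\bigl|f(w)e^{-\beta\overline{z}(w-z)}\bigr|^{p}$, whose value at $w=z$ is $|f(z)|^{p}$, and then complete the square in the same way.)

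Combining the two steps gives
$$|f(z)|=|g(0)|\le\left(\tfrac{\beta}{\alpha}\right)^{1/p}\|g\|_{p}=\left(\tfrac{\beta}{\alpha}\right)^{1/p}e^{\frac{\beta}{2}|z|^{2}}\|f\|_{p},$$
which is the claim. The main obstacle I expect is precisely the cancellation step: the modulating factor $e^{-\beta\overline{z}w}$ is not a free choice but is forced by the requirement that the $u$-linear term vanish, and the exact identity $2\alpha-p\beta=0$ is what both kills the cross terms and converts the leftover constant into the sharp exponential growth $e^{\frac{\beta}{2}|z|^{2}}$. The remaining points—that $g$ is again entire, that it lies in $L^{p}(\C,\dla)$ (which the norm identity itself certifies), and that $|f|^{p}$ is subharmonic for every $p>0$—are routine and impose no lower cutoff on $p$, in contrast to the Bergman space situation.
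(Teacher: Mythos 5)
Your route is genuinely different from the paper's: the paper simply rewrites $\|f\|_p^p$ as the Fock-space norm $\|f\|_{p,\beta}^p$ with parameter $\beta=2\alpha/p$ and quotes the known optimal pointwise estimate (Corollary 2.8 of \cite{Z3}), whereas you give a self-contained argument from the subharmonicity of $|f|^p$ plus a modulation trick. The core of your argument is correct and is a nice elementary alternative: $g(w)=f(w+z)e^{-\beta\overline{z}w}$ is entire with $g(0)=f(z)$, and completing the square using $p\beta=2\alpha$ does give $\|g\|_p^p=e^{\alpha|z|^2}\|f\|_p^p$, hence $\|g\|_p=e^{\frac\beta2|z|^2}\|f\|_p$.

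The one genuine flaw is your identification of the constant. Since $d\lambda_\alpha$ is a probability measure, averaging the circular sub-mean value inequality for $|f|^p$ against it picks up total mass $1$, so your $c$ equals $1$, not $\beta/\alpha$; your argument therefore proves $|f(z)|\le\|f\|_p\,e^{\frac\beta2|z|^2}$, i.e., the lemma with constant $1$ in place of $(\beta/\alpha)^{1/p}$. You cannot do better: taking $f\equiv1$ and $z=0$ gives $|f(0)|=\|f\|_p=1$, so no constant below $1$ is possible, while $(\beta/\alpha)^{1/p}=(2/p)^{1/p}<1$ for $p>2$. In other words, the inequality as printed in the lemma is actually false for $p>2$; the spurious factor comes from a normalization slip (Corollary 2.8 of \cite{Z3} uses the weight $\frac{p\beta}{2\pi}$, which here equals $\frac\alpha\pi$ exactly, so no extra factor should arise). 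The slip is harmless downstream, since Lemma~\ref{lem2} immediately replaces $(\beta/\alpha)^{1/p}$ by $1$, and your proof with $c=1$ delivers precisely the estimate the rest of the paper uses. So: correct and genuinely different method, but state the conclusion with constant $1$ rather than forcing it to match $(\beta/\alpha)^{1/p}$.
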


\begin{proof}
It is clear that
$$\|f\|^p_p=\frac\alpha\pi\inc|f(z)|^pe^{-\alpha|z|^2}\,dA(z)=\frac\alpha\beta\cdot
\frac\beta\pi\inc\left|f(z)e^{-\frac\beta2|z|^2}\right|^p\,dA(z).$$
The desired estimate then follows from Corollary 2.8 in \cite{Z3}.
\end{proof}

We will also need the following estimate several times later on.

\begin{lem}
Suppose $p>2$ and $S$ is a linear operator on $F^2_\alpha$. Then
$$|\langle SK_w,K_z\rangle|\le\|S_w1\|_pe^{\frac\alpha2(|z|^2+|w|^2)-\sigma|z-w|^2}$$
for all $w$ and $z$ in the complex plane, where $\beta=2\alpha/p$ and $\sigma
=(\alpha-\beta)/2$. Consequently, if $\|S_w1\|_p\le C$ for some 
constant $C>0$ and all $w\in\C$, then for the same constant $C$ we have
$$|\langle SK_w,K_z\rangle|\le Ce^{\frac\alpha2(|z|^2+|w|^2)-\sigma|z-w|^2}$$
for all $z$ and $w$ in $\C$.
\label{lem2}
\end{lem}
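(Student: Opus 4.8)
The plan is to express the inner product $\langle SK_w,K_z\rangle$ explicitly through the single function $S_w1$, and then estimate that function pointwise with Lemma~\ref{lem1}. The reduction rests entirely on the elementary algebra of the operators $U_w$, which I would record first. Applying $U_w$ to the constant function gives $U_w1=k_w$, since $1\circ\varphi_w=1$; hence $S_w1=U_wSU_w1=U_wSk_w$, and because $U_w$ is a self-adjoint involution ($U_w^2=I$) this rearranges to $Sk_w=U_w(S_w1)$. Writing $K_w=e^{\frac\alpha2|w|^2}k_w$ (from $k_w=K_w/\sqrt{K(w,w)}$ together with $K(w,w)=e^{\alpha|w|^2}$), I then obtain $SK_w=e^{\frac\alpha2|w|^2}U_w(S_w1)$.

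Next, using the reproducing property $\langle g,K_z\rangle=g(z)$ together with the pointwise action $(U_wg)(z)=g(w-z)k_w(z)$, I would evaluate
$$\langle SK_w,K_z\rangle=e^{\frac\alpha2|w|^2}\,\bigl(U_w(S_w1)\bigr)(z)=e^{\frac\alpha2|w|^2}(S_w1)(w-z)\,k_w(z).$$
Since $e^{\frac\alpha2|w|^2}k_w(z)=e^{\alpha z\overline w}$, this collapses to the clean identity
$$\langle SK_w,K_z\rangle=e^{\alpha z\overline w}\,(S_w1)(w-z),$$
which is the crux of the argument; everything after it is routine. (Note that $S_w1\in F^2_\alpha$, hence entire, because $1=K_0\in\calD$, each $U_w$ maps $\calD$ into itself, and the domain of $S$ contains $\calD$.)

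With this identity in hand, I would take absolute values, so that $|\langle SK_w,K_z\rangle|=e^{\alpha\operatorname{Re}(z\overline w)}\,|(S_w1)(w-z)|$. If $S_w1\notin L^p$ the asserted bound holds trivially, so I may assume $S_w1\in L^p$ and apply Lemma~\ref{lem1} with $\beta=2\alpha/p$. Because $p>2$ forces $\beta/\alpha=2/p<1$, the constant $(\beta/\alpha)^{1/p}$ is at most $1$ and may be discarded, yielding $|(S_w1)(w-z)|\le\|S_w1\|_p\,e^{\frac\beta2|w-z|^2}$. It then only remains to combine exponents: using $|w-z|^2=|z|^2+|w|^2-2\operatorname{Re}(z\overline w)$ to rewrite $\alpha\operatorname{Re}(z\overline w)=\frac\alpha2(|z|^2+|w|^2)-\frac\alpha2|w-z|^2$, and recalling $\sigma=(\alpha-\beta)/2$, the total exponent becomes exactly $\frac\alpha2(|z|^2+|w|^2)-\sigma|z-w|^2$, which is the claim. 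The final sentence of the statement follows at once by substituting the uniform bound $\|S_w1\|_p\le C$. The only point that demands any care is the bookkeeping producing the closed-form identity for $\langle SK_w,K_z\rangle$; there is no genuine analytic obstacle.
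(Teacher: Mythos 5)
Your proof is correct and follows essentially the same route as the paper's: both rest on the identity $S_w1(z)=k_w(z)(Sk_w)(w-z)$, the pointwise bound from Lemma~\ref{lem1} applied to the entire function $S_w1$ (discarding the factor $(\beta/\alpha)^{1/p}\le1$), and the substitution $z\mapsto w-z$ together with $SK_w=e^{\frac\alpha2|w|^2}Sk_w$. The only difference is organizational: you first isolate the closed-form identity $\langle SK_w,K_z\rangle=e^{\alpha z\overline w}(S_w1)(w-z)$ and then estimate, while the paper estimates $S_w1$ first and then unwinds the same identity.
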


\begin{proof}
Recall that
$$S_w1(z)=(U_wSU_w1)(z)=(U_wSk_w)(z)=k_w(z)(Sk_w)(w-z).$$
By Lemma~\ref{lem1}, we have
$$|k_w(z)(Sk_w)(w-z)|\le\left(\frac\beta\alpha\right)^{\frac1p}\,\|S_w1\|_p\,
e^{\frac\beta2|z|^2}\le\|S_w1\|_p\,e^{\frac\beta2|z|^2}$$
for all $z$ and $w$, where $\beta=2\alpha/p<\alpha$. Replacing $z$ by $w-z$, using
$$Sk_w(z)=e^{-\frac\alpha2|w|^2}SK_w(z)=e^{-\frac\alpha2|w|^2}\langle SK_w,K_z\rangle,$$
and simplifying the result, we obtain
$$|\langle SK_w,K_z\rangle|\le\|S_w1\|_p\,e^{\frac\alpha2|z|^2+\frac\alpha2|w|^2
-\sigma|z-w|^2}$$
for all $z$ and $w$.
\end{proof}

The following lemma shows that every linear operator on $F^2_\alpha$ can be represented
as an integral operator in a canonical way.

\begin{lem}
Let $S$ be a linear operator on $F^2_\alpha$ and let $T$ be the integral operator
defined on $L^2(\C,\dla)$ by
\begin{equation}
Tf(z)=\inc f(w)\langle SK_w,K_z\rangle\,\dla(w).
\label{eq1}
\end{equation}
Then $S$ is bounded on $F^2_\alpha$ if and only if $T$ is bounded on $L^2(\C,\dla)$.
Furthermore, when either of them is bounded, $S$ is equal to the restriction of $T$
to $F^2_\alpha$.
\label{lem3}
\end{lem}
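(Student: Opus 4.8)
The plan is to reduce the lemma to a single clean identity: the integral operator $T$ is nothing but $S$ composed with the orthogonal projection $P$ from $L^2(\C,\dla)$ onto $F^2_\alpha$. Once this is established, both the equivalence and the ``furthermore'' statement fall out immediately.

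First I would rewrite the kernel of $T$. Since $\calD$ contains every $K_z$ and lies in the domain of both $S$ and $S^*$, the adjoint relation together with the reproducing property gives
$$\langle SK_w,K_z\rangle=\langle K_w,S^*K_z\rangle=\overline{\langle S^*K_z,K_w\rangle}=\overline{(S^*K_z)(w)}.$$
Because $S^*K_z\in F^2_\alpha\subset L^2(\C,\dla)$, for any $f\in L^2(\C,\dla)$ the integrand $f(w)\langle SK_w,K_z\rangle=f(w)\overline{(S^*K_z)(w)}$ is a product of two $L^2$-functions, so by Cauchy--Schwarz the integral defining $Tf(z)$ converges absolutely for every $z$, and
$$Tf(z)=\langle f,S^*K_z\rangle.$$
Thus $T$ is well defined on all of $L^2(\C,\dla)$, independently of any boundedness hypothesis.

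Next I would split off the projection. Since $S^*K_z\in F^2_\alpha$ and $(I-P)f\perp F^2_\alpha$, we have $\langle f,S^*K_z\rangle=\langle Pf,S^*K_z\rangle$, so $Tf(z)=\langle Pf,S^*K_z\rangle$. From here the two implications follow. For $f\in\calD$ the adjoint relation gives $\langle f,S^*K_z\rangle=\langle Sf,K_z\rangle=(Sf)(z)$ (the last step because $Sf\in F^2_\alpha$), so $Tf=Sf$ on the dense subspace $\calD$; hence if $T$ is bounded then $\|Sf\|=\|Tf\|\le\|T\|\,\|f\|$ for all $f\in\calD$, and density of $\calD$ forces $S$ to extend to a bounded operator with $\|S\|\le\|T\|$. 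Conversely, if $S$ is bounded, then $Pf\in F^2_\alpha$ lies in its domain and $\langle Pf,S^*K_z\rangle=\langle SPf,K_z\rangle=(SPf)(z)$, so $Tf=SPf$ for every $f\in L^2(\C,\dla)$; therefore $\|Tf\|=\|SPf\|\le\|S\|\,\|f\|$ and $T$ is bounded with $\|T\|\le\|S\|$. This simultaneously yields $\|S\|=\|T\|$ and, since $P$ restricts to the identity on $F^2_\alpha$, the identity $T|_{F^2_\alpha}=S$ asserted in the furthermore part.

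The computations are short, so I do not expect a serious obstacle; the one point requiring care is conceptual rather than technical. Because $S$ is only densely defined, I must keep the domains straight throughout: the identity $Tf=Sf$ is legitimate only for $f\in\calD$ (where $Sf$ is guaranteed to lie in $F^2_\alpha$), whereas the identity $Tf=SPf$ requires the boundedness of $S$ so that $SPf$ is meaningful for arbitrary $f\in L^2(\C,\dla)$. Reading ``$S$ is bounded'' as ``$S$ extends to a bounded operator on $F^2_\alpha$'' is what lets both implications close up, and the passage from $\calD$ to all of $F^2_\alpha$ is precisely where the density of $\calD$ enters.
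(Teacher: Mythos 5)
Your proposal is correct and follows essentially the same route as the paper: both hinge on the identity $Tf(z)=\langle f,S^*K_z\rangle$, the observation that $T$ annihilates $L^2(\C,d\lambda_\alpha)\ominus F^2_\alpha$ (which you package as $T=SP$), the verification $Tf=Sf$ on $\calD$, and the density of $\calD$. The factorization through the projection $P$ is a slightly tidier way of organizing the same computations the paper performs.
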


\begin{proof}
For any fixed $z\in\C$, the function
$$w\mapsto\langle K_z,SK_w\rangle=\langle S^*K_z,K_w\rangle=(S^*K_z)(w)$$
is entire and belongs to $F^2_\alpha$ for any fixed $z\in\C$. Therefore, $Tf=0$ for every
$f\in L^2(\C,\dla)\ominus F^2_\alpha$. 

If $S$ is bounded on $F^2_\alpha$ and $f=K_a$ is the reproducing kernel at some point
$a\in\C$, then by the reproducing property of $K_a$,
\begin{eqnarray*}
Tf(z)&=&\inc K(w,a)\langle SK_w,K_z\rangle\,\dla(w)\\
&=&\overline{\inc\langle S^*K_z,K_w\rangle K(a,w)\,\dla(w)}\\
&=&\overline{\langle S^*K_z,K_a\rangle}=\langle SK_a,K_z\rangle\\
&=&SK_a(z)=Sf(z).
\end{eqnarray*}
It follows that $Tf=Sf$ on $\calD$ and $\|Tf\|\le\|S\|\|f\|$ for all $f\in\calD$. 
Combining this with the conclusion of the previous paragraph, we conclude that $T$ is
bounded on $L^2(\C,\dla)$ and $S$ is equal to the restriction of $T$ to $F^2_\alpha$.

Conversely, if $T$ is bounded on $L^2(\C,\dla)$ and $f\in\calD$, then
$$Tf(z)=\inc f(w)\overline{S^*K_z(w)}\,\dla(w)=\langle f, S^*K_z\rangle=
\langle Sf,K_z\rangle=Sf(z)$$
for all $z\in\C$. This shows that the restriction of $T$ on $\calD$ coincides with action
of $S$ there. Since $\calD$ is dense in $F^2_\alpha$ and $T$ is bounded, we conclude that
$S$ extends to a bounded linear operator on $F^2_\alpha$.
\end{proof}

We can now prove Theorem A which is the main result of this section.

\begin{thm}
Let $S$ be a linear operator on $F^2_\alpha$. If there are constants $p>2$ and $C>0$
such that $\|S_z1\|_p\le C$ for all $z\in\C$, then $S$ is bounded on $F^2_\alpha$ with
$\|S\|\le(2pC)/(p-2)$.
\label{th4}
\end{thm}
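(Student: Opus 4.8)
The plan is to prove boundedness by showing that the integral operator $T$ from Lemma~\ref{lem3} is bounded on $L^2(\C,\dla)$, since by that lemma this is equivalent to boundedness of $S$. The key input is the pointwise kernel estimate from Lemma~\ref{lem2}: under the hypothesis $\|S_z1\|_p\le C$, the kernel $H(z,w)=\langle SK_w,K_z\rangle$ of $T$ satisfies
\begin{equation*}
|H(z,w)|\le Ce^{\frac\alpha2(|z|^2+|w|^2)-\sigma|z-w|^2},\qquad \sigma=\tfrac{\alpha-\beta}2,\ \beta=\tfrac{2\alpha}p.
\end{equation*}
Since $p>2$ we have $\beta<\alpha$, so $\sigma>0$, which is exactly what makes the Gaussian factor decay in $|z-w|$.

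The natural approach — and the one the introduction hints is avoidable — would be Schur's test with a weight like $e^{\frac\alpha2|z|^2}$. Since the paper emphasizes that no Schur test is needed, I would instead estimate the $L^2$ operator norm directly. Writing out $\|Tf\|^2$ and applying Cauchy--Schwarz in the $w$-integral is the cleanest route: the troublesome factors $e^{\frac\alpha2|z|^2}$ appearing in $H(z,w)$ combine with the Gaussian weight $e^{-\alpha|z|^2}$ hidden in $\dla(z)$, and the factor $e^{\frac\alpha2|w|^2}$ pairs against the weight in $\dla(w)$ so that the quantity being integrated is genuinely a convolution against the Gaussian kernel $e^{-\sigma|z-w|^2}$. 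The core computation is then to control $\inc e^{-\sigma|z-w|^2}\,dA(w)$, which equals $\pi/\sigma$, a finite constant independent of $z$.

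Carrying this out, I expect the estimate to reduce to showing that $T$ is dominated in norm by the operator on $L^2(\C,dA)$ with kernel $C\,e^{-\sigma|z-w|^2}$ after absorbing the exponential weights. The operator norm of convolution by a Gaussian is its $L^1$ mass, giving a bound of the shape $C\cdot(\pi/\sigma)\cdot(\text{normalizing constants})$. Tracking the constants carefully — the $\alpha/\pi$ in $\dla$, the value of $\sigma=(\alpha-\beta)/2=\alpha(p-2)/(2p)$, and the Gaussian integral — is what should produce the claimed explicit bound $\|S\|\le 2pC/(p-2)$. The main obstacle is purely bookkeeping: arranging the weights so that the two factors $e^{\frac\alpha2|z|^2}$ and $e^{\frac\alpha2|w|^2}$ are each exactly cancelled by half of the respective Gaussian measures, leaving a clean symmetric convolution, and then computing $\pi/\sigma$ and simplifying $\,(\alpha/\pi)\cdot(\pi/\sigma)=\alpha/\sigma=2p/(p-2)$ to land on the stated constant. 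No weight function needs to be guessed, which is precisely the promised simplification over the Schur-test approach.
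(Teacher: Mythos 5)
Your proposal is correct and follows essentially the same route as the paper: reduce to the integral operator $T$ via Lemma~\ref{lem3}, invoke the pointwise kernel bound of Lemma~\ref{lem2}, absorb the factors $e^{\frac\alpha2|z|^2}$ and $e^{\frac\alpha2|w|^2}$ into the Gaussian weights, and bound the resulting convolution against $e^{-\sigma|z-w|^2}$ on $L^2(\C,dA)$ by its $L^1$ mass $\pi/\sigma$ (which the paper carries out explicitly via Cauchy--Schwarz and Fubini). Your constant bookkeeping, $C\cdot(\alpha/\pi)\cdot(\pi/\sigma)=C\alpha/\sigma=2pC/(p-2)$, matches the paper's exactly.
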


\begin{proof}
By Lemma~\ref{lem3}, it suffices for us to show that the integral operator $T$ defined
by (\ref{eq1}) is bounded on $L^2(\C,\dla)$.

By Lemma~\ref{lem2}, for the same constant $C$ and 
$$\sigma=\frac{\alpha-\beta}2=\frac{\alpha(p-2)}{2p},$$
we have
$$|Tf(z)|\le C\inc|f(w)|e^{\frac\alpha2(|z|^2+|w|^2)-\sigma|z-w|^2}\,d\lambda_\alpha(w)$$
for all $z\in\C$. Rewrite this as
$$F(z)\le C_1\inc|f(w)|e^{-\frac\alpha2|w|^2}e^{-\sigma|z-w|^2}\,dA(w),$$
where $C_1=C\alpha/\pi$ and
$$F(z)=|Tf(z)|e^{-\frac\alpha2|z|^2}.$$
By H\"older's inequality
\begin{eqnarray*}
F(z)^2&\le&C_1^2\inc\left|f(w)e^{-\frac\alpha2|w|^2}
\right|^2e^{-\sigma|z-w|^2}\,dA(w)\inc e^{-\sigma|z-w|^2}\,dA(w)\\
&=&C_2\inc\left|f(w)e^{-\frac\alpha2|w|^2}\right|^2e^{-\sigma|z-w|^2}\,dA(w),
\end{eqnarray*}
where
$$C_2=C_1^2\inc e^{-\sigma|u|^2}\,dA(u)=\frac{C_1^2\pi}\sigma.$$
It follows from Fubini's theorem and a change of variables that
$$\inc\left|Tf(z)e^{-\frac\alpha2|z|^2}\right|^2\,dA(z)\le C_3\inc
\left|f(w)e^{-\frac\alpha2|w|^2}\right|^2\,dA(w),$$
where
$$C_3=C_2\inc e^{-\sigma|u|^2}\,dA(u)=\frac{C_2\pi}\sigma
=\left(\frac{2pC}{p-2}\right)^2.$$
This shows that the operator $T$ is bounded on $L^2(\C,d\lambda_\alpha)$ and
$$\|T\|\le\frac{2p}{p-2}\,C.$$
Restricting $T$ to the space $F^2_\alpha$ then yields the desired result for $S$.
\end{proof}

Note that the proof above only depends on the pointwise estimate derived
in Lemma~\ref{lem2}, not the full assumption about the norms $\|S_z1\|_p$.

\section{Sufficient conditions for compactness}

In this section we present two sufficient conditions for an operator on $F^2_\alpha$
to be compact. The first condition is the little oh version of the condition in 
Theorem A, while the second condition is a natural deviation of the first one.

We begin with Theorem B, the companion result of Theorem A, which
we restate as follows.

\begin{thm}
Let $S$ be a linear operator on $F^2_\alpha$ and $p>2$. If $\|S_z1\|_p\to0$ as
$z\to\infty$, then $S$ is compact on $F^2_\alpha$.
\label{th5}
\end{thm}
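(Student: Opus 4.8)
The plan is to follow the integral-operator route of Theorem~A: by Lemma~\ref{lem3} it suffices to prove that the integral operator $T$ of (\ref{eq1}) is compact on $L^2(\C,\dla)$, since $S$ is the restriction of $T$ to $F^2_\alpha$ and the restriction of a compact operator to the closed subspace $F^2_\alpha$ is again compact. First I would observe that the hypothesis $\|S_z1\|_p\to0$ forces $C:=\sup_{z\in\C}\|S_z1\|_p<\infty$ (the norm is locally bounded and decays at infinity), so Theorem~\ref{th4} already guarantees that $T$ is a bounded operator; this is what allows me to speak of operator-norm limits below.

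The key idea is a decomposition of $T$ into a compact piece plus a piece of small norm. Given $\varepsilon>0$, choose $R>0$ so that $\|S_w1\|_p<\varepsilon$ whenever $|w|>R$, and write $T=T_1+T_2$, where $T_1$ and $T_2$ are the integral operators whose kernels are $\langle SK_w,K_z\rangle$ cut off to $\{|w|\le R\}$ and $\{|w|>R\}$, respectively. For $T_2$ the kernel obeys $|\langle SK_w,K_z\rangle|\,\chi_{\{|w|>R\}}\le\varepsilon\,e^{\frac\alpha2(|z|^2+|w|^2)-\sigma|z-w|^2}$ by Lemma~\ref{lem2}, so rerunning the H\"older and Fubini estimate from the proof of Theorem~\ref{th4} verbatim, with $C$ replaced by $\varepsilon$, yields $\|T_2\|\le 2p\varepsilon/(p-2)$.

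For $T_1$ I would pass to the unitarily equivalent operator on $L^2(\C,dA)$ obtained by conjugating with the isometry $f\mapsto\sqrt{\alpha/\pi}\,e^{-\frac\alpha2|z|^2}f$. Its kernel is $\frac\alpha\pi\langle SK_w,K_z\rangle e^{-\frac\alpha2(|z|^2+|w|^2)}\chi_{\{|w|\le R\}}$, which by Lemma~\ref{lem2} is dominated in modulus by a constant times $e^{-\sigma|z-w|^2}\chi_{\{|w|\le R\}}$. A direct computation gives $\inc\int_{|w|\le R}e^{-2\sigma|z-w|^2}\,dA(w)\,dA(z)<\infty$, so this kernel is square-integrable on $\C\times\C$; hence the conjugated operator is Hilbert--Schmidt and $T_1$ is compact. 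Combining the two pieces, $\|T-T_1\|=\|T_2\|\le 2p\varepsilon/(p-2)$, so $T$ is the operator-norm limit as $\varepsilon\to0$ of the compact operators $T_1$, and is therefore compact.

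The main obstacle, and the technical heart of the argument, is the compactness of $T_1$: one must confirm that restricting the kernel to a bounded $w$-region, together with the Gaussian decay $e^{-\sigma|z-w|^2}$ supplied by Lemma~\ref{lem2}, produces a genuinely Hilbert--Schmidt kernel after the weight conjugation. The remainder is bookkeeping --- verifying the $\varepsilon$-smallness of $T_2$ from the Theorem~A computation and checking that compactness survives the restriction from $L^2(\C,\dla)$ to $F^2_\alpha$.
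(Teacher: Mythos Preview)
Your proposal is correct and is essentially the same argument as the paper's: both split the integral operator $T$ according to whether $|w|$ lies below or above a large radius, show the inner piece is Hilbert--Schmidt via Lemma~\ref{lem2}, and bound the outer piece in operator norm by rerunning the Theorem~\ref{th4} estimate with $C$ replaced by $\varepsilon$. The only cosmetic difference is that you pass to the unitarily equivalent picture on $L^2(\C,dA)$ before checking the Hilbert--Schmidt condition, whereas the paper verifies $\iint|\chi_r(w)\langle SK_w,K_z\rangle|^2\,\dla(z)\,\dla(w)<\infty$ directly in the weighted space; these are the same computation.
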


\begin{proof}
It follows from our standing assumptions on $S$ that the condition $\|S_z1\|_p\to0$ 
as $z\to\infty$ implies that $\|S_z1\|_p$ is bounded in $z$. Therefore, by Theorem~\ref{th4},
$S$ is already bounded on $F^2_\alpha$. By Lemma~\ref{lem3}, it suffices for us to show 
that the integral operator $T$ defined by (\ref{eq1}) is compact on $L^2(\C,\dla)$. 
We do this using an approximation argument.

For any $r>0$ let us consider the integral operator $T_r$ defined on $L^2(\C,\dla)$ by
\begin{eqnarray*}
T_rf(z)&=&\int_{|w|<r}f(w)\langle SK_w,K_z\rangle\,\dla(w)\\
&=&\inc f(w)\chi_r(w)\langle SK_w,K_z\rangle\,\dla(w),
\end{eqnarray*}
where $\chi_r$ is the characteristic function of the disk $\{z\in\C:|z|<r\}$.
It follows easily from Lemma~\ref{lem2} that
$$\inc\inc|\chi_r(w)\langle SK_w,K_z\rangle|^2\,\dla(z)\,\dla(w)<\infty.$$
Thus each $T_r$ is Hilbert-Schmidt. In particular, each $T_r$ is compact on 
$L^2(\C,\dla)$.

Let $D_r=T-T_r$. Then
\begin{eqnarray*}
D_rf(z)&=&\inc f(w)(1-\chi_r(w))\langle SK_w,K_z\rangle\,\dla(w)\\
&=&\int_{|w|>r}f(w)\langle SK_w,K_z\rangle\,\dla(w).
\end{eqnarray*}
We are going to show that $\|D_r\|\to0$ as $r\to\infty$, which would imply that $T$ is 
compact.

Given any $\varepsilon>0$, choose a positive number $R$ such that $\|S_w1\|_p<
\varepsilon$ for all $|w|>R$. By Lemma~\ref{lem2}, for any $r>R$ we have
$$|1-\chi_r(w)||\langle SK_w,K_z\rangle|\le\varepsilon e^{\frac\alpha2(|z|^2+|w|^2)
-\sigma|z-w|^2}$$
for all $z$ and $w$ in $\C$ (just consider the cases $|w|\le r$ and $|w|>r$ separately). 
It follows from the proof of Theorem~\ref{th4} that there is
a positive constant $C$, independent of $\varepsilon$ and $r$, such that
$\|D_r\|\le C\varepsilon$ for all $r>R$. This shows that $\|D_r\|\to0$ as $r\to\infty$
and completes the proof of the theorem.
\end{proof}

Recall from the definition of $S_z$ and $U_z$ that
$$S_z1=U_zSU_z1=U_zSk_z,\qquad z\in\C.$$
Since each $U_z$ is a unitary operator on $F^2_\alpha$, the condition $\|S_z1\|\le C$
is the same as $\|Sk_z\|\le C$. However, $U_z$ is not isometric on $L^p(\C,\dla)$ when
$p\not=2$, so it is natural for us to consider the condition $\|Sk_z\|_p\le C$.

\begin{pro}
Let $S$ be a linear operator on $F^2_\alpha$ and $p>2$. If there is a constant $C>0$
such that $\|Sk_z\|_p\le C$ and $\|S^*k_z\|_p\le C$ for all $z\in\C$, then $S$ is 
Hilbert-Schmidt on $F^2_\alpha$. In particular, $S$ is compact.
\label{pro6}
\end{pro}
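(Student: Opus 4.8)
The plan is to represent $S$ as the integral operator $T$ of Lemma~\ref{lem3} and show that its kernel $\langle SK_w,K_z\rangle$ is square-integrable against $\dla\otimes\dla$, which is exactly the assertion that $T$ (hence $S$) is Hilbert-Schmidt. The two hypotheses $\|Sk_z\|_p\le C$ and $\|S^*k_z\|_p\le C$ will each be converted into a pointwise bound on the kernel, and the crucial point is that the product of these two bounds is symmetric in $z,w$ and Gaussian-integrable precisely because $p>2$.

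First I would record the two pointwise estimates. Since $K_w=e^{\frac\alpha2|w|^2}k_w$, we have $\langle SK_w,K_z\rangle=(SK_w)(z)=e^{\frac\alpha2|w|^2}(Sk_w)(z)$. Applying Lemma~\ref{lem1} to the entire function $Sk_w\in L^p(\C,\dla)$, with $\beta=2\alpha/p$ so that $(\beta/\alpha)^{1/p}<1$, gives $|(Sk_w)(z)|\le\|Sk_w\|_p\,e^{\frac\beta2|z|^2}\le Ce^{\frac\beta2|z|^2}$, hence
$$|\langle SK_w,K_z\rangle|\le Ce^{\frac\alpha2|w|^2+\frac\beta2|z|^2}.$$
Symmetrically, writing $\langle SK_w,K_z\rangle=\overline{(S^*K_z)(w)}=e^{\frac\alpha2|z|^2}\,\overline{(S^*k_z)(w)}$ and applying Lemma~\ref{lem1} to $S^*k_z$ yields
$$|\langle SK_w,K_z\rangle|\le Ce^{\frac\alpha2|z|^2+\frac\beta2|w|^2}.$$

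Next I would bound the square of the kernel by the product of these two estimates,
$$|\langle SK_w,K_z\rangle|^2\le C^2e^{\frac{\alpha+\beta}2(|z|^2+|w|^2)}.$$
Because $p>2$ forces $\beta=2\alpha/p<\alpha$, the exponent satisfies $\frac{\alpha+\beta}2<\alpha$, so the Gaussian integral $\inc e^{\frac{\alpha+\beta}2|z|^2}\,\dla(z)=\frac{2\alpha}{\alpha-\beta}$ is finite. Integrating the squared-kernel bound against $\dla(z)\,\dla(w)$ and factoring by Fubini then gives
$$\inc\inc|\langle SK_w,K_z\rangle|^2\,\dla(z)\,\dla(w)\le C^2\left(\frac{2\alpha}{\alpha-\beta}\right)^2<\infty.$$
Thus $T$ is Hilbert-Schmidt on $L^2(\C,\dla)$; in particular $T$ is bounded, so by Lemma~\ref{lem3} $S$ is the restriction of $T$ to $F^2_\alpha$. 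Since $T$ annihilates $L^2(\C,\dla)\ominus F^2_\alpha$ (as established in the proof of Lemma~\ref{lem3}), computing the Hilbert-Schmidt norm with respect to an orthonormal basis of $F^2_\alpha$ extended to one of $L^2(\C,\dla)$ shows $\|S\|_{HS}=\|T\|_{HS}<\infty$, so $S$ is Hilbert-Schmidt, hence compact.

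The argument is routine once the correct bound is assembled; the only real decision is the combination step. The subtlety to watch is that neither one-sided estimate is by itself symmetric in $z$ and $w$, so squaring and integrating with a single estimate would leave a factor such as $e^{\frac\alpha2(|z|^2+|w|^2)}$ that defeats one of the Gaussian weights. It is the interplay between the $S$-bound and the $S^*$-bound that splits the exponential growth evenly and keeps both integrals convergent, and this is exactly where the hypothesis on $S^*$ is indispensable.
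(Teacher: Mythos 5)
Your argument is correct and follows essentially the same route as the paper: both proofs convert the hypotheses on $\|Sk_z\|_p$ and $\|S^*k_z\|_p$ into the two pointwise kernel bounds via Lemma~\ref{lem1}, combine them to get a bound of the form $Ce^{\frac{\alpha+\beta}{2}\cdot\frac12\cdot 2(|z|^2+|w|^2)}$ with $(\alpha+\beta)/2<\alpha$, and conclude that the canonical integral operator of Lemma~\ref{lem3} is Hilbert--Schmidt on $L^2(\C,\dla)$. Your extra remarks (the explicit value of the Gaussian integral and the observation that $T$ annihilates $L^2(\C,\dla)\ominus F^2_\alpha$ so that $S$ inherits the Hilbert--Schmidt property) only make explicit what the paper leaves implicit.
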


\begin{proof}
By Lemma~\ref{lem1}, the assumption on $\|Sk_w\|_p$ implies that there exists another
positive constant $C$ such that
$$|(Sk_w)(z)|\le Ce^{\frac\beta2|z|^2},\qquad z,w\in\C,$$
where $\beta=2\alpha/p<\alpha$. This can be rewritten as
\begin{equation}
|\langle SK_w,K_z\rangle|\le Ce^{\frac\alpha2|w|^2+\frac\beta2|z|^2}
\label{eq2}
\end{equation}
for all $z$ and $w$. Similarly, the assumption on $\|S^*k_w\|_p$ implies that
\begin{equation}
|\langle SK_w,K_z\rangle|\le Ce^{\frac\alpha2|z|^2+\frac\beta2|w|^2}
\label{eq3}
\end{equation}
for all $z$ and $w$.

Multiply the inequalities in (\ref{eq2}) and (\ref{eq3}) and then take the square 
root on both sides. The result is
$$|\langle SK_w,K_z\rangle|\le Ce^{\frac\delta2(|z|^2+|w|^2)}$$
for all $z$ and $w$, where $\delta=(\alpha+\beta)/2<\alpha/2$. It follows from this that
$$\inc\inc\left|\langle SK_w,K_z\rangle\right|^2\,\dla(w)\,\dla(z)<\infty,$$
so that the integral operator $T$ defined by
$$Tf(z)=\inc f(w)\langle SK_w,K_z\rangle\,d\lambda_\alpha(w)$$
is Hilbert-Schmidt on $L^2(\C,d\lambda_\alpha)$. Since $S$ is the restriction of $T$ on
$F^2_\alpha$, we conclude that $S$ is Hilbert-Schmidt on $F^2_\alpha$.
\end{proof}

Once again, we only used the pointwise estimates deduced from the assumptions on
$\|Sk_z\|_p$ and $\|S^*k_z\|_p$.

\section{Compactness via the Berezin transform}

In this section we show that, under the assumption of Theorem A, the 
compactness of a linear operator on $F^2_\alpha$ can be characterized in terms of 
its Berezin transform.

\begin{lem}
For any $a$ and $w$ in the complex plane we have
$$U_aK_w=\overline{k_a(w)}K_{\varphi_a(w)},\quad U_ak_w=\beta k_{\varphi_a(w)},
\quad \widetilde S\circ\varphi_a=\widetilde{S_a},$$
where $\beta$ is a unimodular constant depending on $a$ and $w$.
\label{lem7}
\end{lem}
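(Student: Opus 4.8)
The plan is to verify each of the three identities in Lemma~\ref{lem7} by direct computation, working from the explicit formulas for $U_a$, $K_w$, $k_w$, $\varphi_a$, and $\widetilde S$ given in the introduction. All three statements are essentially bookkeeping with the Gaussian kernel, so I expect no conceptual obstacle; the only care needed is in tracking the unimodular phase factors.

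First I would establish the action of $U_a$ on reproducing kernels. Since $U_af = (f\circ\varphi_a)\,k_a$ with $\varphi_a(w)=a-w$, I would compute $(U_aK_w)(u) = K_w(a-u)\,k_a(u)$. Substituting $K_w(a-u)=e^{\alpha(a-u)\overline w}$ and $k_a(u)=e^{-\frac\alpha2|a|^2+\alpha a\overline u}$, I would collect the exponent and recognize it as a constant multiple of $K_{\varphi_a(w)}(u)=K_{a-w}(u)=e^{\alpha u\overline{(a-w)}}$. Matching the $u$-dependent part forces the constant, and the remaining $u$-independent factor should simplify to $\overline{k_a(w)}=e^{-\frac\alpha2|a|^2+\alpha\overline a w}$, giving the first identity. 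The second identity then follows immediately: since $k_w=K_w/\sqrt{K(w,w)}$ and $k_{\varphi_a(w)}=K_{\varphi_a(w)}/\sqrt{K(\varphi_a(w),\varphi_a(w))}$, applying the first identity and comparing the two normalizing scalars yields $U_ak_w=\beta\,k_{\varphi_a(w)}$ where $\beta$ collects $\overline{k_a(w)}$ together with the ratio of square-root normalizations; I would check that $|\beta|=1$ by computing $|\overline{k_a(w)}|^2\,K(w,w)/K(\varphi_a(w),\varphi_a(w))$ and confirming it equals $1$, which reduces to the elementary identity $|a-w|^2 + 2\operatorname{Re}(a\overline w) = |a|^2+|w|^2$.

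For the third identity, I would use the second one to write $\widetilde{S_a}(w)=\langle S_a k_w, k_w\rangle = \langle U_aSU_a k_w,k_w\rangle$. Since $U_a$ is self-adjoint, this equals $\langle S\,U_ak_w, U_ak_w\rangle$, and substituting $U_ak_w=\beta\,k_{\varphi_a(w)}$ makes the unimodular factor cancel against its conjugate, leaving $\langle S k_{\varphi_a(w)}, k_{\varphi_a(w)}\rangle = \widetilde S(\varphi_a(w)) = (\widetilde S\circ\varphi_a)(w)$. This is precisely the claimed relation $\widetilde S\circ\varphi_a=\widetilde{S_a}$.

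The one point requiring genuine attention — the \emph{main obstacle}, if there is one — is the cancellation of the phase $\beta$ in the third identity: because $\beta$ depends on both $a$ and $w$, one must be sure that the \emph{same} $\beta$ appears in both slots of the inner product so that $\beta\overline\beta=|\beta|^2=1$ disposes of it cleanly. This is guaranteed here since both occurrences of $U_ak_w$ are literally the same vector, so the concern is purely notational. Everything else is routine manipulation of exponentials, and I would present the first computation in full and indicate the other two as consequences.
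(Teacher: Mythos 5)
Your proposal is correct and follows essentially the same route as the paper: a direct computation for $U_aK_w$, normalization to get the second identity (the paper records $\beta=e^{\frac\alpha2(a\overline w-\overline aw)}$ explicitly), and self-adjointness plus unitarity of $U_a$ with the cancellation $|\beta|^2=1$ for the Berezin transform identity. One small slip worth fixing: since $\beta=\overline{k_a(w)}\sqrt{K(\varphi_a(w),\varphi_a(w))/K(w,w)}$, the quantity that equals $1$ is $|\overline{k_a(w)}|^2\,K(\varphi_a(w),\varphi_a(w))/K(w,w)$ rather than the ratio you wrote, although the identity $|a-w|^2+2\operatorname{Re}(a\overline w)=|a|^2+|w|^2$ that you invoke is exactly what closes that check.
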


\begin{proof}
The first identity follows from the definition of $U_a$ and the explicit form of the
kernel function. The second identity follows from the first one with 
$$\beta=e^{\frac\alpha2(a\overline w-\overline aw)}.$$
By the definition of the Berezin transform, the definition of $S_a$, and the second
identity that we have already proved, we have
\begin{eqnarray*}
\widetilde{S_a}(w)&=&\langle S_ak_w,k_w\rangle=\langle U_aSU_ak_w,k_w\rangle\\
&=&\langle SU_ak_w,U_ak_w\rangle=|\beta|^2\langle Sk_{\varphi_a(w)},k_{\varphi_a(w)}\rangle\\
&=&\widetilde S(\varphi_a(w)).
\end{eqnarray*}
This proves the third identity.
\end{proof}

\begin{lem}
Let $S$ be a linear operator on $F^2_{\alpha}$. Suppose that there are constants $p>2$ 
and $C>0$ such that $\|S_z1\|_p\le C$ for all $z\in\C$. Then $\widetilde{S}(w)\to0$ 
as $w\to\infty$ if and only if for every (or some) $2<p'<p$ we have $\|S_w 1\|_{p'}\to0$ 
as $w\to\infty$.
\label{lem8}
\end{lem}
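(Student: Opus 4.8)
The plan is to first reduce the Berezin transform to a point evaluation. Combining Lemma~\ref{lem7} (with $\varphi_w(0)=w$) and the reproducing property gives
\[
\widetilde S(w)=\widetilde{S_w}(0)=\langle S_w1,1\rangle=\langle S_w1,K_0\rangle=(S_w1)(0).
\]
Hence the easy direction is immediate: if $\|S_w1\|_{p'}\to0$ for some $2<p'<p$, then Lemma~\ref{lem1} applied at $z=0$ gives $|\widetilde S(w)|=|(S_w1)(0)|\le(\beta'/\alpha)^{1/p'}\|S_w1\|_{p'}\to0$, where $\beta'=2\alpha/p'$. Since ``every'' trivially implies ``some,'' everything rests on the reverse implication: assuming $\widetilde S(w)\to0$, show $\|S_w1\|_{p'}\to0$ for every $2<p'<p$. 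Writing $g_a=S_a1$, we have the uniform bound $\|g_a\|_p\le C$, so by Lemma~\ref{lem1} the pointwise domination $|g_a(z)|\le C'e^{\frac\beta2|z|^2}$ holds with $\beta=2\alpha/p$. The crucial point is that $z\mapsto e^{\frac\beta2|z|^2}$ lies in $L^{p'}(\dla)$ precisely because $\frac{p'\beta}2<\alpha\iff p'<p$. Thus, once we show that $g_a(z)\to0$ pointwise as $a\to\infty$, dominated convergence (applied along an arbitrary sequence $a_n\to\infty$) yields $\|g_a\|_{p'}\to0$.

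So the heart of the matter is the pointwise decay $g_a(z)=(S_a1)(z)\to0$ for each fixed $z$. Here I would exploit the standing assumption that $\zeta\mapsto SK_\zeta$ is conjugate analytic. Using $S_a=U_aSU_a$ and Lemma~\ref{lem7} one computes
\[
\langle S_aK_u,K_v\rangle=\overline{k_a(u)}\,k_a(v)\,\langle SK_{a-u},K_{a-v}\rangle,
\]
which (i) shows that $(u,v)\mapsto\langle S_aK_u,K_v\rangle$ is conjugate analytic in $u$ and analytic in $v$, and (ii), after the exact cancellation of all the terms involving $a$, reduces to the Lemma~\ref{lem2} estimate $|\langle S_aK_u,K_v\rangle|\le Ce^{\frac\alpha2(|u|^2+|v|^2)-\sigma|u-v|^2}$ with the same constant $C$, uniformly in $a$. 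I then package this into the entire function of two variables
\[
\Psi_a(s,t)=\langle S_aK_{\bar s},K_t\rangle,\qquad (s,t)\in\C^2,
\]
which is jointly holomorphic and, by the displayed estimate, uniformly bounded on compact subsets of $\C^2$ independently of $a$. On the totally real ``anti-diagonal'' $\{s=\bar w,\ t=w\}$ it equals the rescaled Berezin transform, $\Psi_a(\bar w,w)=\langle S_aK_w,K_w\rangle=e^{\alpha|w|^2}\widetilde{S_a}(w)=e^{\alpha|w|^2}\widetilde S(a-w)$, where the last equality is Lemma~\ref{lem7}. Since $\widetilde S\to0$ at infinity, $\widetilde S(a-w)\to0$ uniformly for $w$ in any fixed compact set, so $\Psi_a\to0$ uniformly on compact subsets of the anti-diagonal.

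Finally I would run a normal-families argument. Given any sequence $a_n\to\infty$, the uniform local bound lets Montel's theorem extract a subsequence along which $\Psi_{a_n}$ converges locally uniformly to an entire $\Psi_\infty$ on $\C^2$. By the previous paragraph $\Psi_\infty$ vanishes on the anti-diagonal $\{s=\bar w,\,t=w\}$, which is a maximal totally real submanifold of $\C^2$ (after the complex-linear change $\zeta=(s+t)/2,\ \eta=(t-s)/(2i)$ it becomes $\R^2\subset\C^2$); a holomorphic function vanishing there vanishes identically, so $\Psi_\infty\equiv0$ and in particular $g_{a_n}(z)=\Psi_{a_n}(0,z)\to0$. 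Since every subsequence of $a_n\to\infty$ has a further subsequence with this property, $g_a(z)\to0$ for each fixed $z$, which completes the pointwise decay and hence the proof. The main obstacle is exactly this transfer from the diagonal information carried by $\widetilde S$ to the off-diagonal quantities $\langle SK_{a},K_{a-z}\rangle$ that control $S_a1$; it is overcome only because the conjugate-analyticity hypothesis turns the question into one of uniqueness and normality for holomorphic functions on $\C^2$, allowing the diagonal decay to propagate off the diagonal.
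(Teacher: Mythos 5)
Your proof is correct, but the crucial step --- deducing the pointwise decay of $S_a1$ from the decay of $\widetilde S$ --- is handled quite differently from the paper. The paper's proof of the hard direction invokes the moment computation from the proof of Lemma 6.26 in \cite{Z3} (with $\widetilde f$ replaced by $\widetilde S$) to obtain $\langle S_z1,w^n\rangle\to0$ for every $n\ge0$, concludes that $S_z1\to0$ weakly and hence pointwise, and then upgrades to $\|S_z1\|_{p'}\to0$ via uniform integrability and Vitali's theorem. You instead make the underlying mechanism explicit: using the standing conjugate-analyticity assumption you extend $\langle S_aK_u,K_v\rangle$ to a holomorphic function $\Psi_a$ on $\C^2$ with a local bound that is uniform in $a$ (your cancellation $|\overline{k_a(u)}k_a(v)|e^{\frac\alpha2(|a-u|^2+|a-v|^2)}=e^{\frac\alpha2(|u|^2+|v|^2)}$ checks out, so the Lemma~\ref{lem2} bound transfers verbatim to $S_a$), observe that the Berezin transform is the restriction of $\Psi_a$ to a totally real $2$-plane, and use Montel together with the identity theorem to propagate the decay off that plane to the point $(0,z)$; you then finish with dominated convergence, using that $e^{\frac{\beta p'}{2}|z|^2}$ is $\lambda_\alpha$-integrable precisely because $p'<p$, rather than with Vitali's theorem. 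Your route has the advantage of being self-contained --- it does not outsource the key step to an argument written for Toeplitz operators --- at the cost of importing a small amount of several-complex-variables machinery (Hartogs/Osgood for joint holomorphy, Montel in $\C^2$, vanishing on a maximal totally real submanifold), and the two final limiting arguments are essentially interchangeable here. Both proofs ultimately rest on the same principle, namely that by analyticity the diagonal values $\langle SK_w,K_w\rangle$ determine the off-diagonal values $\langle SK_u,K_v\rangle$; you have simply carried out that analytic continuation by hand instead of citing it.
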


\begin{proof}
If for some $p'\in (2,p)$ we have $\|S_w 1\|_{p'}\rightarrow 0$ as $w\rightarrow \infty$,
then by H\"older's inequality,
$$|\widetilde{S}(w)|=|\langle S_w 1, 1\rangle| \leq \|S_w 1\|_{p'}\rightarrow 0$$
as $w\rightarrow \infty$.

Next, suppose $\widetilde{S}(w)\rightarrow 0$ as $w\rightarrow \infty$ and fix any
$p'\in (2,p)$. We proceed to show that $\|S_w 1\|_{p'}\rightarrow 0$ as $w\rightarrow\infty$.

For any $a$ and $z$ we have
$$\widetilde{S}(\varphi_z(a))=\widetilde{S_z}(a)=e^{-\alpha|a|^2}\langle S_z K_a,K_a \rangle,$$
where
$$K_a(u)=e^{\alpha u\overline{a}}=\sum_{k=0}^{\infty}\frac{\alpha^k}{k!}u^k \overline{a}^k.$$
By the proof of Lemma 6.26 in \cite{Z3}, starting at line 4 from the bottom of page 240 and
finishing at line 3 from the top of page 242, with $\widetilde f$ replaced by $\widetilde S$ and
$T_{f\circ\varphi_z}$ replaced by $S_z$, we will have
$$\lim_{z\to\infty}\langle S_z1, z^n\rangle=0$$
for every $n\ge0$. Since the polynomials are dense in $F^2_\alpha$, we conclude that
$S_z1\to 0$ weakly in $F^2_\alpha$ as $z\to\infty$. In particular, for every $w\in\C$,
$S_z1(w)\to 0$ as $z\to\infty$.

Let $s=p/p'>1$ and choose $t>1$ such that $1/s+1/t=1$. For any measurable set $E$ we have
\begin{eqnarray*}
\int_E|S_z1(w)|^{p'}\,d\lambda_\alpha(w)&\le&\left[\int_E|S_z1(w)|^p\,d\lambda_\alpha(w)
\right]^{\frac1s}\left[\int_E d\lambda_\alpha(w)\right]^{\frac1t}\\
&\le&\|S_z1\|_p^{p'}\left[\lambda_\alpha(E)\right]^{\frac1t}.
\end{eqnarray*}
Since $\|S_z1\|_p\le C$ for all $z\in\C$, this shows that the family $\{|S_z1|^{p'}:z\in\C\}$
is uniformly integrable. By Vitali's Theorem, 
$$\lim_{z\to\infty}\inc|S_z1(w)|^{p'}\,d\lambda_\alpha(w)=0.$$
This completes the proof of the lemma.
\end{proof}

We can now prove Theorem C, the main result of this section, which we restate as follows.

\begin{thm}
Suppose $S$ is a linear operator on $F^2_\alpha$, $p>2$, $C>0$, and $\|S_z1\|_p\le C$
for all $z\in\C$. Then $S$ is compact on $F^2_\alpha$ if and only if $\widetilde S(z)
\to0$ as $z\to\infty$.
\label{th9}
\end{thm}

\begin{proof}
By Theorem~\ref{th4}, $S$ is bounded on $F^2_\alpha$. If $S$ is further compact, then
$\widetilde S(z)=\langle Sk_z,k_z\rangle\to0$ as $z\to\infty$, because $k_z\to0$ weakly
in $F^2_\alpha$ as $z\to\infty$.

Conversely, if $\widetilde S(z)\to0$ as $z\to\infty$, it follows from Lemma~\ref{lem8} 
that $\|S_z1\|_{p'}\to0$ as $z\to\infty$, where $p'$ is any fixed number strictly between
$2$ and $p$. This together with Theorem~\ref{th5} then implies that $S$ is compact.
\end{proof}

\section{An application to Toeplitz operators}

Let $P:L^2(\C,\dla)\to F^2_\alpha$ denote the orthogonal projection. If 
$\psi\in L^\infty(\C)$, we can define a linear operator $T_\psi$ on $F^2_\alpha$ by
$T_\psi f=P(\psi f)$. It is clear that $T_\psi$ is bounded and $\|T_\psi\|\le\|\psi\|_\infty$.
It is also easy to verifty that 
$$(T_\psi)_z=U_zT_\psi U_z=T_{\psi\circ\varphi_z}$$
for all $z\in\C$. In particular, $(T_\psi)_z1=P(\psi\circ\varphi_z)$, or
$$(T_\psi)_z1(w)=\inc K(w,u)\psi(z-u)\,\dla(u),\qquad w\in\C.$$
It follows that
\begin{eqnarray*}
|(T_\psi)_z1(w)|&\le&\|\psi\|_\infty\inc|e^{\alpha w\overline u}|\,\dla(u)\\
&=&\|\psi\|_\infty\inc\left|e^{\alpha(w/2)\overline u}\right|^2\,\dla(u)\\
&=&\|\psi\|_\infty e^{\alpha|w/2|^2}=\|\psi\|_\infty e^{\frac\alpha4|w|^2}
\end{eqnarray*}
for all $w\in\C$. This shows that
$$\sup_{z\in\C}\inc\left|(T_\psi)_z1\right|^p\,\dla<\infty$$
whenever $0<p<4$. Therefore, the assumption in Theorem~\ref{th9} is satisfied for each
$p\in(2,4)$. Consequently, we arrive at the well-known result that such a Toeplitz operator 
is compact if and only if its Berezin transform vanishes at $\infty$. See \cite{CIL,Z3}.

Using the integral representation for the orthogonal projection, it is possible to
define Toeplitz operators $T_\psi$ for functions $\psi$ that are not necessarily
bounded. In particular, $T_\psi$ is well defined on $\calD$ whenever $\psi$ belongs
to the space BMO used in Section 6.4 of \cite{Z3}. For such a symbol function $\psi$,
Lemma 6.25 of \cite{Z3} states that
$$|(T_\psi)_z1(w)|\le Ce^{\frac\alpha4|w|^2},\qquad w\in\C,$$
whenever $\widetilde\psi$ is bounded, which implies that $\|(T_\psi)_z1\|_p\le C$ for
$2<p<4$. This together with the arguments in the previous 
paragraph shows that, for such $\psi$, the operator $T_\psi$ is bounded if and only
if its Berezin transform is bounded; and $T_\psi$ is compact if and only if its
Berezin transform vanishes at $\infty$. See \cite{CIL,Z3} again.

The Berezin transform of $T_\psi$ is usually written as
$\widetilde\psi$ or $B_\alpha\psi$. It is easy to see that
$$B_\alpha\psi(z)=\inc\psi(z-w)\,\dla(w)=\frac\alpha\pi\inc\psi(w)e^{-\alpha|z-w|^2}
\,\dla(w)$$
for $z\in\C$. See \cite{Z3} for more information about the Berezin transform which is
also called the heat transform in many articles.

The arguments above can also be extended to operators of the form 
$T=T_{\psi_1}\cdots T_{\psi_n}$, where each $\psi_k$ belongs to $L^\infty(\C)$. 
In fact, in the case $S=T_{\psi_1}T_{\psi_2}$, we have
$$S_z1=P\left[\psi_1\circ\varphi_z(T_{\psi_2})_z1\right].$$
Using the integral representation for the outside $P$ and the pointwise estimate we
already obtained for $(T_{\psi_2})_z1$, we arrive at
$$|S_z1(w)|\le C\inc e^{\frac\alpha4|u|^2}|K(w,u)|\,\dla(u)\le C_1e^{\frac\alpha3|w|^2}.$$
This implies that 
$$\sup_z\|S_z1\|_p<\infty,\qquad 2<p<3.$$
More generally, if $\sigma>2$, then
$$\inc e^{\frac\alpha\sigma|u|^2}|K(w,u)|\,\dla(u)\le Ce^{\frac\alpha{\sigma'}|w|^2},$$
with
$$\sigma'=4\left(1-\frac1\sigma\right)>2.$$
So by mathematical induction, each operator $S=T_{\psi_1}\cdots T_{\psi_n}$ satisfies
the pointwise estimate
$$|S_z1(w)|\le Ce^{\frac\alpha\sigma|w|^2},\qquad w\in\C,$$
for some $\sigma>2$. It follows that 
$$\sup_z\|S_z1\|_p<\infty,\qquad p\in(2,\sigma).$$

Going one step further, we can also extend the arguments above to operators on 
$F^2_\alpha$ that are finite sums of finite products of Toeplitz operators.

\section{Further results and remarks}

For any $p>0$ the Fock space $F^p_\alpha$ is defined to be the set of all entire 
functions $f$ such that $f(z)e^{-\frac\alpha2|z|^2}$ belongs to $L^p(\C,dA)$. The
norm in $F^p_\alpha$ is defined by
$$\|f\|^p_{p,\alpha}=\frac{p\alpha}{2\pi}\inc\left|f(z)e^{-\frac\alpha2|z|^2}
\right|^p\,dA(z).$$
It is clear that when $p=2$, the definition here is consistent with the definition
of $F^2_\alpha$ in the Introduction. More generally, we have
$$F^p_\alpha=H(\C)\cap L^p(\C,d\lambda_\beta),\qquad \beta=\frac{p\alpha}2,$$
where $H(\C)$ is the space of all entire functions. Equivalently,
$$H(\C)\cap L^p(\C,\dla)=F^p_\beta,\qquad \beta=\frac{2\alpha}p.$$

Although both $F^p_\alpha$ and $H(\C)\cap L^p(\C,\dla)$ are natural extentions of
the Fock space $F^2_\alpha$, in most cases it is much more beneficial, more 
convenient, and more natural to use $F^p_\alpha$ instead of the other one. Of course
there are exceptions, the results of this paper being one of them. Nevertheless,
the following question still seems natural: What happens if we replaced the 
condition $\|S_z1\|_p\le C$ by the condition $\|S_z1\|_{p,\alpha}\le C$? We do not 
know the answer. But the techniques used in the paper would certainly not work,
because the optimal pointwise estimate for functions in $F^p_\alpha$ is given by
$$|f(z)|\le\|f\|_{p,\alpha}e^{\frac\alpha2|z|^2},\qquad z\in\C.$$
See Corollary 2.8 in \cite{Z3}. We needed a certain decrease in the exponent in
order to perform the analysis in Sections 2--4.

In the case of $S=T_\psi$, where $\psi\in L^\infty(\C)$, we already showed that 
$$\sup_{z\in\C}\|(T_\psi)_z1\|_p<\infty,\qquad \sup_{z\in\C}\|(T_\psi^*)_z1\|_p<\infty,$$
for $0<p<4$. On the other hand, for every $p\in[1,\infty)$, the projection $P$ is 
bounded from the space
$$L^p_\alpha(\C)=\left\{f:f(z)e^{-\frac\alpha2|z|^2}\in L^p(\C,dA)\right\}$$
onto the space $F^p_\alpha$; see \cite{Z3} for example. It follows from this and the
identity $(T_\psi)_z1=P(\psi\circ\varphi_z)$ that
$$\sup_{z\in\C}\|(T_\psi)_z1\|_{p,\alpha}<\infty,\qquad
\sup_{z\in\C}\|(T_\psi^*)_z1\|_{p,\alpha}<\infty,$$
for all $1\le p<\infty$. Thus the condition $\|S_z1\|_p\le C$ appears stronger (or
more difficult to satisfy) than the condition $\|S_z1\|_{p,\alpha}\le C$. This is 
easily confirmed by the elementary continuous embedding
$$H(\C)\cap L^p(\C,\dla)=F^p_\beta\subset F^p_\alpha,$$
where $\beta=(2\alpha)/p<\alpha$ for $p>2$. 

The example in the previous section of Toeplitz operators on $F^2_\alpha$ induced by 
bounded symbols shows that the condition $\|S_z1\|_p\le C$ is a meaningful one. We just 
do not know what the weaker condition $\|S_z1\|_{p,\alpha}\le C$ would imply. But there 
is more we can say.

For each $z\in\C$ the operator $U_z$ is actually a surjective isometry on each 
$F^p_\alpha$, and $k_z$ is actually a unit vector in $F^p_\alpha$. Therefore, the 
condition $\|S_z1\|_{p,\alpha}\le C$ is the same as $\|Sk_z\|_{p,\alpha}\le C$.
If there exists a bounded linear operator $S$ on $F^p_\alpha$, $2<p<\infty$, such
that $S$ is not bounded on $F^2_\alpha$, then the condition $\|S_z1\|_{p,\alpha}\le C$ 
would not imply the boundedness of $S$ on $F^2_\alpha$. Although we do not have an
example at hand, this seems very plausible to us.

Note that the proof of Theorem~\ref{th4} amounts to showing that the integral 
operator $T$ defined by
$$Tf(z)=\inc f(w)H(z,w)\,d\lambda_\alpha(w)$$
is bounded on $L^2(\C,d\lambda_\alpha)$, where 
$$H(z,w)=e^{\frac\alpha2(|z|^2+|w|^2)-\sigma|z-w|^2}.$$ 
Since $f\in L^2(\C,d\lambda_\alpha)$ if and only if the function
$f(w)e^{-\frac\alpha2|w|^2}$ is in $L^2(\C,dA)$, and since
$$e^{-\frac\alpha2|z|^2}Tf(z)=\frac\alpha\pi\inc\left[f(w)e^{-\frac\alpha2|w|^2}\right]
e^{-\sigma|z-w|^2}\,dA(w),$$
we see that the operator $T$ on $L^2(\C,d\lambda_\alpha)$ is unitarily equivalent to 
the Berezin transform $B_\sigma$ as an operator on $L^2(\C,dA)$. Recall that
$$B_\sigma f(z)=\frac\sigma\pi\inc f(w)e^{-\sigma|z-w|^2}\,dA(w).$$
The boundedness of $B_\sigma$ on $L^2(\C,dA)$ is actually a known result. 
See \cite{Z3} for example.

A natural question here is the following: is the Berezin transform $B_\sigma$
compact on $L^2(\C,dA)$? Since the proof of Theorem~\ref{th4} along with the fact that
$\|(T_\psi)_z1\|_p\le C$ for $2<p<4$ shows that every
Toeplitz operator $T_\psi$ on $F^2_\alpha$, $\psi\in L^\infty(\C)$, is dominated by 
$B_\sigma$ as an operator on $L^2(\C,dA)$, and it is very easy to see that there are 
such Toeplitz operators that are not compact, we see that $B_\sigma$ cannot possibly 
be compact on $L^2(\C,dA)$. To see this more directly, we consider the sequence 
$\{\chi_n\}$ of characteristic functions of the disks $B(n,1)$. It is easy to see 
that $\{\chi_n\}$ converges to $0$ weakly in $L^2(\C,dA)$. But
$$B_\sigma\chi_n(z)=\frac\sigma\pi\int_{B(0,1)}e^{-\sigma|z-n-w|^2}\,dA(w)=g(z-n),$$
where
$$g(z)=\frac\sigma\pi\int_{B(0,1)}e^{-\sigma|z-w|^2}\,dA(w).$$
By translation invariance, the norm of each $B_\sigma\chi_n$ in $L^2(\C,dA)$ is equal 
to that of $g$. Thus $\|B_\sigma\chi_n\|_{L^2(\C,dA)}\not\to0$ as $n\to\infty$, so 
$B_\sigma$ is not compact on $L^2(\C,dA)$.

Our arguments can also be adapted to work for Bergman spaces on the unit ball $\bn$ in
$\cn$. More specifically, for any $\alpha>-1$ we consider the weighted volume measure
$$dv_\alpha(z)=c_\alpha(1-|z|^2)^\alpha\,dv(z),$$
where $dv$ is ordinary volume measure on $\bn$ and $c_\alpha$ is a normalizing constant
chosen so that $v_\alpha(\bn)=1$. For any $p>0$ the spaces
$$A^p_\alpha=H(\bn)\cap L^p(\bn,dv_\alpha)$$
are called (weighted) Bergman spaces, where $H(\bn)$ is the space of all holomorphic
functions on $\bn$.

The space $A^2_\alpha$ is a reproducing kernel Hilbert space whose reproducing kernel
is given by
$$K(z,w)=\frac1{(1-\langle z,w\rangle)^{n+1+\alpha}}.$$
The normalized reproducing kernels are still defined by
$$k_z(w)=\frac{K(w,z)}{\sqrt{K(z,z)}}=\frac{(1-|z|^2)^{\frac{n+1+\alpha}2}}{(1-
\langle z,w\rangle)^{n+1+\alpha}}.$$

For every $z\in\bn$ there is also a canonical involutive automorphism $\varphi_z$ of
the unit ball $\bn$, and an associated self-adjoint unitary operator $U_z$ can be defined
on $A^2_\alpha$ by $U_zf=f\circ\varphi_zk_z$. If $S$ is a linear operator on $A^2_\alpha$,
not necessirly bounded, whose domain contains all finite linear combinations of kernel
functions, then we can still consider $S_z=U_zSU_z$.

The optimal pointwise estimate for functions in Bergman spaces is given by
$$|f(z)|\le\frac{\|f\|_{A^p_\alpha}}{(1-|z|^2)^{\frac{n+1+\alpha}p}}.$$
See \cite{Z2} for this and the results quoted in the previous two paragraphs. It 
follows from the proof of Lemma~\ref{lem2} that the condition
$$\sup_{z\in\bn}\|S_z1\|_{A^p_\alpha}<\infty,$$
where $p>2$, implies the inequality
$$|\langle SK_w,K_z\rangle|\le\frac{C|1-\langle z,w\rangle|^{\left(\frac2p
-1\right)(n+1+\alpha)}}{(1-|z|^2)^{\frac{n+1+\alpha}p}(1-|w|^2)^{\frac{n+1+\alpha}p}}.$$
Our techniques here can be adapted to show that for 
\begin{equation}
p>2+\frac{2n}{\alpha+1},
\label{eq4}
\end{equation}
the condition $\|S_z1\|_{A^p_\alpha}\le C$ implies that the operator $S$ is bounded
on $A^2_\alpha$. Similarly, the condition
$$\lim_{|z|\to1^-}\|S_z1\|_{A^p_\alpha}=0$$
implies that the operator $S$ is not only bounded but also compact on $A^2_\alpha$.
Furthermore, under the assumption $\|S_z1\|_{A^p_\alpha}\le C$, the compactness of
$S$ on $A^2_\alpha$ is equivalent to the vanishing of the Berezin transform of $S$
on the unit sphere $|z|=1$. We leave the details to the interested reader.

We point out that in the case when $n=1$ and $\alpha=0$, 
the restriction $p>4$ in (\ref{eq4}) is not as good as the optimal restriction $p>3$ 
obtained in \cite{MZ}. The discrepancy stems from the fact that our approach here only
uses pointwise estimates derived from the assumption about norms, while the approach in
\cite{MZ} made full use of the assumption about norms.

We also mention that the conditions
\begin{equation}
\sup_{z\in\bn}\|Sk_z\|_{A^p_\alpha}<\infty,\quad
\sup_{z\in\bn}\|S^*k_z\|_{A^p_\alpha}<\infty,
\label{eq5}
\end{equation}
where $p>2$, imply the inequality
$$|\langle SK_w,K_z\rangle|\le\frac C{(1-|z|^2)^{\frac{n+1+\alpha}{q}}
(1-|w|^2)^{\frac{n+1+\alpha}{q}}},$$
where $q\in(2,p)$ is the exponent given by
$$\frac1{q}=\frac12\left(\frac12+\frac1p\right).$$
If $p$ and $\alpha$ satisfy
$$\alpha+1>\left(\frac12+\frac1p\right)(n+1+\alpha),$$
then the conditions in (\ref{eq5}) imply that $S$ is Hilbert-Schmidt on
$A^2_\alpha$. Obviously, the dependence on $p$ and $\alpha$ in the Bergman space
theory is much more delicate. Again, the interested reader can easily work out the
details by following arguments in previous sections of this paper.

\end{document}